\documentclass{amsart}

%
%
%
%

\usepackage{amssymb}

\newtheorem{theorem}{Theorem}[section]
\newtheorem{corollary}[theorem]{Corollary}

\newtheorem{proposition}[theorem]{Proposition}
\theoremstyle{definition}
\newtheorem{definition}[theorem]{Definition}
\newtheorem{example}[theorem]{Example}
\theoremstyle{remark}

\numberwithin{equation}{section}
\begin{document}



\title[A Note on Almost Anti-Periodic Functions in Banach Spaces]{\bf A Note on Almost Anti-Periodic Functions in Banach Spaces}

\vspace{15mm} 


\author{Marko Kosti\' c}
\address{Faculty of Technical Sciences,
University of Novi Sad,
Trg D. Obradovi\' ca 6, 21125 Novi Sad, Serbia}
\email{marco.s@verat.net}

\author{Daniel Velinov}
\address{Department for Mathematics, Faculty of Civil Engineering, Ss. Cyril and Methodius University, Skopje,
Partizanski Odredi
24, P.O. box 560, 1000 Skopje, Macedonia}
\email{velinovd@gf.ukim.edu.mk}

\keywords{Almost anti-periodic functions, Almost periodic functions, Anti-Periodic Functions, Bohr transform, Banach spaces.\\
\indent 2010 {\it Mathematics Subject Classification}. Primary: 35B15, 34G25. Secondary: 47D03, 47D06.\\
\indent {\it Received}: \\
\indent {\it Revised}:}

\begin{abstract}
The main aim of this note is to introduce the notion of an almost anti-periodic function in Banach space. We prove some characterizations for this class of functions, investigating also its relationship with the classes of anti-periodic functions and
almost periodic functions in Banach spaces.
\end{abstract}

\maketitle

\section{Introduction and Preliminaries}

As mentioned in the abstract, the main aim of this note is to introduce the notion of an almost anti-periodic function in Banach space as well as to prove some characterizations for this class of functions. Any anti-periodic function is almost anti-periodic, and any almost anti-periodic function is almost periodic. Unfortunately, almost anti-periodic functions do not have a linear vector structure with the usually considered operations of pointwise addition of functions and multiplication with scalars. The main result of paper is Theorem \ref{ghf-prc}, in which we completely profile the closure of linear span of almost anti-periodic functions in the space of almost periodic functions. We also prove some other statements regarding almost anti-periodic functions, and introduce
the concepts of Stepanov almost anti-periodic functions, asymptotically almost anti-periodic functions and Stepanov asymptotically almost anti-periodic functions. We investigate the almost anti-periodic properties of convolution products, providing also
 a few elementary examples and applications.

Let $(X,\|  \cdot \|)$ be a complex Banach space. By $C_{b}([0,\infty):X)$ we denote the space consisting of all bounded continuous functions from $[0,\infty)$ into $X;$ the symbol $C_{0}([0,\infty):X)$ denotes the closed subspace of $C_{b}([0,\infty):X)$ consisting of functions vanishing at infinity.
By $BUC([0,\infty):X)$ we denote the space consisted of all bounded uniformly continuous functions from $[0,\infty)$
to $X.$ This space becomes one of Banach's endowed with the sup-norm.

The concept of almost periodicity was introduced by Danish mathematician H. Bohr around 1924-1926 and later generalized by many other authors (cf. \cite{Di}-\cite{Gu} and \cite{Le} for more details on the subject). Let
$I={\mathbb R}$ or $I=[0,\infty),$ and let $f : I \rightarrow X$ be continuous. Given $\epsilon>0,$ we call $\tau>0$ an $\epsilon$-period for $f(\cdot)$ iff
\begin{align*}
\| f(t+\tau)-f(t) \| \leq \epsilon,\quad t\in I.
\end{align*}
The set constituted of all $\epsilon$-periods for $f(\cdot)$ is denoted by $\vartheta(f,\epsilon).$ It is said that $f(\cdot)$ is almost periodic, a.p. for short, iff for each $\epsilon>0$ the set $\vartheta(f,\epsilon)$ is relatively dense in $I,$ which means that
there exists $l>0$ such that any subinterval of $I$ of length $l$ meets $\vartheta(f,\epsilon)$.

The space consisted of all almost periodic functions from the interval $I$ into $X$ will be denoted by $AP(I:X).$ Equipped with the sup-norm, $AP(I:X)$ becomes a Banach space.

For the sequel, we need some preliminary results appearing already in the pioneering paper \cite{Ba} by H. Bart and S. Goldberg, who introduced the notion of an almost periodic strongly continuous semigroup there (see \cite{Ar} for more details on the subject).
The translation semigroup $(W(t))_{t\geq 0}$ on $AP([0,\infty) : X),$ given by $[W(t)f](s):=f(t+s),$ $t\geq 0,$ $s\geq 0,$ $f\in AP([0,\infty) : X)$ is consisted solely of surjective isometries $W(t)$ ($t\geq 0$) and can be extended to a $C_{0}$-group $(W(t))_{t\in {\mathbb R}}$ of isometries on $AP([0,\infty) : X),$ where $W(-t):=W(t)^{-1}$ for $t>0.$ Furthermore, the mapping $E : AP([0,\infty) : X) \rightarrow AP({\mathbb R} : X),$ defined by
$$
[Ef](t):=[W(t)f](0),\quad t\in {\mathbb R},\ f\in AP([0,\infty) : X),
$$
is a linear surjective isometry and $Ef$ is the unique continuous almost periodic extension of a function $f(\cdot)$ from $AP([0,\infty) : X)$ to the whole real line. We have that $
[E(Bf)]=B(Ef)$ for all $B\in L(X)$ and $f\in  AP([0,\infty) : X).$

The most intriguing properties of almost periodic vector-valued functions are collected in the following two theorems (in the case that $I={\mathbb R},$ these assertions are well-known in the existing literature; in the case that $I=[0,\infty),$ then these assertions can be deduced by using their validity in the case $I={\mathbb R}$ and the properties of extension mapping $E(\cdot);$
see \cite{Ko} for more details).

\begin{theorem}\label{svinja}
Let $f \in AP(I : X).$ Then the following holds:
\begin{itemize}
\item[(i)] $f\in BUC(I : X);$
\item[(ii)] if $g \in AP(I :X),$ $h \in AP(I :{\mathbb C}),$ $\alpha,\ \beta \in {\mathbb C},$ then $\alpha f + \beta g$ and $ hf \in AP(I :X);$
\item[(iii)] Bohr's transform of $f(\cdot),$
$$
P_{r}(f) := \lim_{t\rightarrow \infty}\frac{1}{t}\int^{t}_{0}e^{-irs}f(s)\, ds,
$$
exists for all $r\in {\mathbb R}$ and
$$
P_{r}(f) := \lim_{t\rightarrow \infty}\frac{1}{t}\int^{t+\alpha}_{\alpha}e^{-irs}f(s)\, ds
$$
for all $\alpha \in I,\ r\in {\mathbb R};$
\item[(iv)] if $P_{r}(f) = 0$ for all $r \in {\mathbb R},$ then $f(t) = 0$ for all $t \in I;$
\item[(v)] $\sigma(f):=\{r\in {\mathbb R} :  P_{r}(f) \neq 0\}$ is at most countable;
\item[(vi)] if $c_{0} \nsubseteq X,$ which means that $X$ does not contain an isomorphic copy of $c_{0},$  $I={\mathbb R}$
and $g(t) =\int^{t}_{0}f(s)\, ds$ ($t \in {\mathbb R}$) is bounded, then $g \in AP({\mathbb R} :X);$
\item[(vii)] if $(g_{n})_{n\in {\mathbb N}}$ is a sequence in $AP(I:X)$ and $(g_{n})_{n\in {\mathbb N}}$ converges uniformly to $g$, then
$g \in AP(I:X);$
\item[(viii)] if $I={\mathbb R}$ and $f^{\prime} \in BUC({\mathbb R}:X),$ then $f^{\prime} \in AP({\mathbb R} : X);$
\item[(ix)] (Spectral synthesis) $f\in \overline{span\{ e^{i\mu \cdot } x : \mu \in \sigma(f),\ x\in R(f)\}};$
\item[(x)] $R(f)$ is relatively compact in $X;$
\item[(xi)] we have
\begin{align*}
\|f\|_{\infty}=\sup_{t\geq t_{0}}\|f(t)\|,\quad t_{0}\in I.
\end{align*}
\end{itemize}
\end{theorem}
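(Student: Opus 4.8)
\emph{Proof proposal.} The plan is to treat this statement not as something to be proved from scratch but as a curated list of classical facts, organising the argument around two moves: first reduce the case $I=[0,\infty)$ to the case $I=\mathbb{R}$ via the extension isometry $E$, and then quote the standard theory of Bohr almost periodic functions with values in a Banach space (see \cite{Le} and the references therein, and \cite{Ko} for the half-line reductions carried out in detail). For the reduction, recall that $E:AP([0,\infty):X)\to AP(\mathbb{R}:X)$ is a surjective linear isometry, that $E(Bf)=B(Ef)$ for every $B\in L(X)$, and that $E$ intertwines the translation groups; consequently boundedness and uniform continuity pass between $Ef$ and $f=(Ef)|_{[0,\infty)}$ (i), the vector and module operations are preserved (ii), the Bohr means satisfy $P_{r}(f)=P_{r}(Ef)$ for all $r\in\mathbb{R}$, whence $\sigma(f)=\sigma(Ef)$ (iii)--(v), and uniform convergence is preserved (vii). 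Moreover every value $(Ef)(t)$ with $t<0$ is, by the relative density of $\vartheta(Ef,\epsilon)$, approximated to within $\epsilon$ by some $f(t+\tau)$ with $t+\tau\geq 0$; this gives $\overline{R(f)}=\overline{R(Ef)}$ and $\|f\|_{\infty}=\|Ef\|_{\infty}$, hence (x) and (xi), and it also shows that the spectral-synthesis span in (ix) transported from $Ef$ is realised by $f$ itself. So the substance lies entirely in the case $I=\mathbb{R}$.

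For $I=\mathbb{R}$ I would dispatch the parts in the usual order. Part (i) follows by covering $\mathbb{R}$ with translates of a fixed compact interval, using a relatively dense set of $1$-periods for boundedness and the same idea together with uniform continuity on one such interval for uniform continuity. For (ii), the term $\alpha f+\beta g$ uses that $f$ and $g$ possess a relatively dense set of common $\epsilon$-periods (simultaneous almost periods), and $hf$ is treated by boundedness of $h$ and $f$. Parts (iii)--(v) are the classical mean value theorem, the uniqueness theorem, and the Bessel-inequality argument for countability of the spectrum; (vii) is closedness of $AP(\mathbb{R}:X)$ under the sup-norm; (ix) is the Bochner--Fej\'er approximation theorem; and (x) follows from Bochner's criterion characterising almost periodicity by relative compactness of the family of translates in $C_{b}(\mathbb{R}:X)$. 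For (vi) I would invoke the Kadets theorem on integration of vector-valued almost periodic functions: a bounded antiderivative of an almost periodic function is almost periodic provided $c_{0}\nsubseteq X$. Finally, (viii) I would deduce from (ii) and (vii): the difference quotients $h^{-1}\bigl(f(\cdot+h)-f(\cdot)\bigr)$ lie in $AP(\mathbb{R}:X)$ by (ii), and since $f^{\prime}$ is uniformly continuous they converge to $f^{\prime}$ uniformly on $\mathbb{R}$ as $h\to 0$, so $f^{\prime}\in AP(\mathbb{R}:X)$ by (vii).

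There is no single hard step to isolate here; the work is bookkeeping and correct attribution, together with the two observations that reduce (viii) to (ii) and (vii) and the half-line claims to the real line. The only point deserving genuine care is the assertion that $E$ carries the range of $f$ and the spectral-synthesis span \emph{onto}, not merely into, the corresponding objects attached to $Ef$ --- and this is exactly what the density of $\{(Ef)(t+s):s\geq 0\}$ around $(Ef)(t)$, $t<0$, supplies, so no argument on the half-line is needed beyond that remark. The deepest ingredient being quoted, rather than reproved, is the Kadets theorem used for (vi).
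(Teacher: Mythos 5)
Your proposal matches the paper's treatment: the paper offers no proof of this theorem, stating only that the assertions are classical for $I=\mathbb{R}$ and that the half-line case follows from the extension isometry $E$ together with the literature cited (\cite{Ko}, \cite{Le}), which is exactly the two-step reduction you carry out (with rather more detail than the paper supplies). Your individual justifications --- common almost periods for (ii), the Kadets theorem for (vi), difference quotients plus (vii) for (viii), and the density argument transporting $R(f)$, the norm, and the spectral-synthesis span back from $Ef$ to $f$ --- are all standard and correct.
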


\begin{theorem}\label{vorw}
(Bochner's criterion) Let $f\in BUC({\mathbb R}: X).$
Then $f(\cdot)$ is almost periodic iff for any sequence $(b_n)$ of numbers from ${\mathbb R}$ there exists a subsequence $(a_{n})$ of $(b_n)$
such that $(f(\cdot+a_{n}))$ converges in $BUC({\mathbb R}: X).$
\end{theorem}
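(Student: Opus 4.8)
The plan is to recast almost periodicity of $f$ as relative compactness of the orbit $\mathcal{F} := \{ f(\cdot + s) : s \in {\mathbb R}\}$ in the Banach space $BUC({\mathbb R}:X)$ (equipped with the sup-norm) and to read off both implications from that. It is convenient to put $\Theta(f,\epsilon) := \{ \tau \in {\mathbb R} : \|f(\cdot + \tau) - f(\cdot)\|_{\infty} \le \epsilon\}$: this set is symmetric, satisfies $\Theta(f,\epsilon) \cap (0,\infty) = \vartheta(f,\epsilon)$, and, via the substitution $u \mapsto u - b$, one has $\|f(\cdot + a) - f(\cdot + b)\|_{\infty} \le \epsilon$ if and only if $a - b \in \Theta(f,\epsilon)$.

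$(\Rightarrow)$ Suppose $f$ is almost periodic; then $f \in BUC({\mathbb R}:X)$ by Theorem~\ref{svinja}(i), so the map $s \mapsto f(\cdot + s)$ is continuous from ${\mathbb R}$ into $BUC({\mathbb R}:X)$ and, for each $l > 0$, the set $K_{l} := \{ f(\cdot + s) : s \in [0,l]\}$ is compact. Fix $\epsilon > 0$ and let $l$ be an inclusion length for $\vartheta(f,\epsilon)$. Given $s \in {\mathbb R}$: if $s \in [0,l]$ there is nothing to do; if $s > l$, the interval $[s-l,s] \subseteq (0,\infty)$ meets $\vartheta(f,\epsilon)$, so picking $\tau$ there and setting $s' := s - \tau \in [0,l]$ yields $s - s' = \tau \in \Theta(f,\epsilon)$; if $s < 0$, the interval $[-s,l-s] \subseteq (0,\infty)$ meets $\vartheta(f,\epsilon)$, and with $\tau$ there and $s' := s + \tau \in [0,l]$ we get $s - s' = -\tau \in \Theta(f,\epsilon)$. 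In every case $\|f(\cdot + s) - f(\cdot + s')\|_{\infty} \le \epsilon$ for some $s' \in [0,l]$, so the compact set $K_{l}$ is an $\epsilon$-net for $\mathcal{F}$, hence admits a finite $\epsilon$-net, hence $\mathcal{F}$ has a finite $2\epsilon$-net. Since $\epsilon > 0$ was arbitrary, $\mathcal{F}$ is totally bounded and $\overline{\mathcal{F}}$ is compact; thus every sequence $(f(\cdot + b_n))$ has a subsequence $(f(\cdot + a_n))$ convergent in $BUC({\mathbb R}:X)$.

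$(\Leftarrow)$ We argue by contraposition. If $f$ is not almost periodic, there is $\epsilon_0 > 0$ for which $\vartheta(f,\epsilon_0)$ is not relatively dense, i.e. $(0,\infty)$ contains intervals of arbitrary length disjoint from $\vartheta(f,\epsilon_0)$; being contained in $(0,\infty)$, such intervals are disjoint from $\Theta(f,\epsilon_0)$ as well. We construct $(b_n) \subseteq {\mathbb R}$ inductively so that $b_n - b_m \notin \Theta(f,\epsilon_0)$ whenever $n \neq m$: set $b_1 := 0$, and having chosen $b_1, \dots, b_n$, pick an interval $J \subseteq (0,\infty)$ of length $L > \max_{i} b_i - \min_{i} b_i$ disjoint from $\Theta(f,\epsilon_0)$ and take $b_{n+1}$ in the nonempty set $\bigcap_{i=1}^{n} (b_i + J)$, so that $b_{n+1} - b_i \in J$ for each $i \le n$. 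Then $\|f(\cdot + b_n) - f(\cdot + b_m)\|_{\infty} > \epsilon_0$ for all $n \neq m$, so no subsequence of $(f(\cdot + b_n))$ is Cauchy and hence none converges in $BUC({\mathbb R}:X)$, contradicting the hypothesis. Therefore $f$ is almost periodic.

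The bookkeeping with translated sup-norms and the compactness of $K_{l}$ is routine. The delicate point is the inductive construction in $(\Leftarrow)$: one has to extract from the mere failure of relative density of $\vartheta(f,\epsilon_0)$, at each stage, a \emph{single} almost-period-free interval long enough to accommodate all the differences $b_{n+1} - b_i$ at once, and to check that the intersection $\bigcap_{i=1}^{n}(b_i + J)$ is indeed nonempty (it is, precisely because $L > \max_i b_i - \min_i b_i$). I expect that to be the main obstacle; everything else is direct verification.
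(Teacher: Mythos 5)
Your proof is correct. The paper itself offers no proof of Theorem \ref{vorw} --- it is quoted as the classical Bochner criterion from the cited literature --- and your argument is precisely the standard one: total boundedness of the orbit $\{f(\cdot+s):s\in{\mathbb R}\}$ via a compact $\epsilon$-net $K_l$ built from an inclusion length for $\vartheta(f,\epsilon)$ in one direction, and in the other the inductive extraction of a sequence $(b_n)$ with $\|f(\cdot+b_n)-f(\cdot+b_m)\|_\infty>\epsilon_0$ for $n\neq m$ from the failure of relative density. The two points you flag as delicate (nonemptiness of $\bigcap_i(b_i+J)$ when the almost-period-free interval $J$ has length exceeding $\max_i b_i-\min_i b_i$, and the passage between $\vartheta$ and the symmetric set $\Theta$) are both handled correctly; the only cosmetic remark is that you silently adopt the standard convention that relative density of the $\epsilon$-periods is measured in $(0,\infty)$ (equivalently, of $\Theta(f,\epsilon)$ in ${\mathbb R}$), which is the intended reading of the paper's definition since it only admits positive $\epsilon$-periods.
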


Theorem \ref{vorw} has served S. Bochner to introduce the notion of an almost automorphic function, which slightly generalize the notion of an almost periodic function \cite{Bo}. For more details about almost periodic and almost automorphic solutions of abstract Volterra integro-differential equations, we refer the reader to the monographs by T. Diagana \cite{Di},
G.~M.~N'Gu\' er\' ekata \cite{Gu},
M. Kosti\' c \cite{Ko} and M. Levitan, V. V. Zhikov \cite{Le}.

By either $AP(\Lambda : X)$ or $AP_{\Lambda}(I:X),$ where $ \Lambda$ is a non-empty subset of $I,$ we denote the vector subspace of $AP(I:X)$ consisting of all functions $f\in AP(I:X)$ for which the inclusion $\sigma(f)\subseteq \Lambda$ holds good. It can be easily seen that
$AP(\Lambda : X)$ is a closed subspace of $AP(I:X)$ and therefore Banach space itself.

\section{Almost Anti-Periodic Functions}\label{kragujevac}

Assume that $I={\mathbb R}$ or $I=[0,\infty),$ as well as that $f : I \rightarrow X$ is continuous. Given $\epsilon>0,$ we call $\tau>0$ an $\epsilon$-antiperiod for $f(\cdot)$ iff
\begin{align}\label{r-basara}
\| f(t+\tau) +f(t) \| \leq \epsilon,\quad t\in I.
\end{align}
In what follows, by $\vartheta_{ap}(f,\epsilon)$ we denote the set of all $\epsilon$-antiperiods for $f(\cdot).$

We introduce the notion of an almost anti-periodic function as follows.

\begin{definition}\label{anti-periodic}
It is said that $f(\cdot)$ is almost anti-periodic iff for each $\epsilon>0$ the set $\vartheta_{ap}(f,\epsilon)$ is relatively dense in $I.$
\end{definition}

Suppose that $\tau>0$ is an $\epsilon$-antiperiod for $f(\cdot).$ Applying \eqref{r-basara} twice, we get that
\begin{align*}
\| f(t+2\tau) & -f(t)\| =\bigl\| [f(t+2\tau)+f(t+\tau)]-[f(t+\tau)+f(t)]\bigr\|
\\ & \leq \| f(t+2\tau) +f(t+\tau)\| +\|f(t+\tau) +f(t)\|\leq 2 \epsilon,\quad t\in I.
\end{align*}
Taking this inequality in account, we obtain almost immediately from elementary definitions that $f(\cdot)$ needs to be almost periodic. Further on, assume that
$f : I \rightarrow X$ is anti-periodic, i.e, there exists $\omega>0$ such that $f(t+\omega)=-f(t),$ $t\in I.$ Then we obtain inductively that $f(t+(2k+1) \omega)=-f(t),$ $k\in {\mathbb Z},$ $t\in I.$ Since the set $\{(2k+1)\omega : k\in {\mathbb Z}\}$ is relatively dense in $I,$ the above implies that $f(\cdot)$ is almost anti-periodic. Therefore, we have proved the following theorem:

\begin{theorem}\label{izgubio se}
\begin{itemize}
\item[(i)] Assume $f : I \rightarrow X$ is almost anti-periodic. Then $f : I \rightarrow X$ is almost periodic.
\item[(ii)] Assume $f : I \rightarrow X$ is anti-periodic. Then $f : I \rightarrow X$ is almost anti-periodic.
\end{itemize}
\end{theorem}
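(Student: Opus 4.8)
The plan is to establish both implications directly from the definitions, using the triangle inequality for part (i) and a short induction for part (ii).

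For part (i), I would first record the elementary observation that doubling an $(\epsilon/2)$-antiperiod produces an $\epsilon$-period. Precisely, if $\tau\in\vartheta_{ap}(f,\epsilon/2)$, then applying \eqref{r-basara} at the points $t$ and $t+\tau$ and using the triangle inequality yields
\[
\bigl\| f(t+2\tau)-f(t)\bigr\|\leq\bigl\| f(t+2\tau)+f(t+\tau)\bigr\|+\bigl\|f(t+\tau)+f(t)\bigr\|\leq\epsilon,\quad t\in I,
\]
so that $2\tau\in\vartheta(f,\epsilon)$; hence $2\,\vartheta_{ap}(f,\epsilon/2)\subseteq\vartheta(f,\epsilon)$. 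It then remains only to note that the left-hand set is relatively dense whenever $\vartheta_{ap}(f,\epsilon/2)$ is: if every subinterval of $I$ of length $l$ meets $\vartheta_{ap}(f,\epsilon/2)$, then every subinterval of length $2l$ meets $2\,\vartheta_{ap}(f,\epsilon/2)$, and therefore meets $\vartheta(f,\epsilon)$. Since $f$ is continuous by hypothesis, this gives $f\in AP(I:X)$.

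For part (ii), suppose $f(t+\omega)=-f(t)$ for all $t\in I$ and some $\omega>0$. A straightforward induction on $k$ shows that $f\bigl(t+(2k+1)\omega\bigr)=-f(t)$ for all $t\in I$ and all integers $k\geq 0$ (when $I={\mathbb R}$ the same holds for all $k\in{\mathbb Z}$, but the nonnegative values already suffice, which is the point that needs a little attention in the half-line case so that one never evaluates $f$ outside $I$). Consequently each number of the form $(2k+1)\omega$ is a $0$-antiperiod, hence belongs to $\vartheta_{ap}(f,\epsilon)$ for every $\epsilon>0$. Since consecutive such numbers differ by $2\omega$, the set $\{(2k+1)\omega:k\in{\mathbb Z},\ k\geq 0\}$ is relatively dense in $I$, and therefore so is $\vartheta_{ap}(f,\epsilon)$ for every $\epsilon>0$; thus $f$ is almost anti-periodic.

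Neither step poses a genuine obstacle; the only places calling for mild care are the bookkeeping with the relative-density constant under the rescaling $\tau\mapsto 2\tau$ in part (i), and the restriction to nonnegative $k$ in part (ii) when $I=[0,\infty)$.
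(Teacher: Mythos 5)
Your proof is correct and takes essentially the same route as the paper: part (i) via the doubling inequality $\|f(t+2\tau)-f(t)\|\leq\|f(t+2\tau)+f(t+\tau)\|+\|f(t+\tau)+f(t)\|$, and part (ii) via the induction $f(t+(2k+1)\omega)=-f(t)$ together with the relative density of $\{(2k+1)\omega\}$. Your explicit bookkeeping of the relative-density constant under $\tau\mapsto 2\tau$ and the restriction to $k\geq 0$ when $I=[0,\infty)$ merely spell out details the paper leaves implicit.
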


It is well known that any anti-periodic function $f : I \rightarrow X$ is periodic since, with the notation used above, we have that
$f(t+2k \omega)=f(t),$ $k\in {\mathbb Z} \setminus \{0\},$ $t\in I.$ But, the constant non-zero function is a simple example of a periodic function (therefore, almost periodic function) that is neither anti-periodic nor almost anti-periodic.

\begin{example}\label{op}
\begin{itemize}
\item[(i)] Consider the function $f(t):=\sin (\pi t) +\sin (\pi t \sqrt{2}),$ $t\in {\mathbb R}.$ This is an example of an almost anti-periodic function that is not a periodic function. This can be verified as it has been done by A. S. Besicovitch \cite[Introduction, p. ix]{Be}.
\item[(ii)] The function $g(t):=f(t)+5,$ $t\in {\mathbb R},$ where $f(\cdot)$ is defined as above, is almost periodic, not almost anti-periodic and not periodic.
\end{itemize}
\end{example}

We continue by noting the following simple facts. Let $f : I \rightarrow X$ be continuous, and let $\epsilon'>\epsilon>0.$ Then the following holds true:
\begin{itemize}
\item[(i)] $\vartheta_{ap}(f,\epsilon) \subseteq \vartheta_{ap}(f,\epsilon').$
\item[(ii)] If $I={\mathbb R}$ and \eqref{r-basara} holds with some $\tau>0,$ then \eqref{r-basara} holds with $-\tau.$
\item[(iii)] If $I={\mathbb R}$ and $\tau_{1},\ \tau_{2} \in \vartheta_{ap}(f,\epsilon), $ then $\tau_{1}\pm \tau_{2} \in \vartheta(f,\epsilon).$
\end{itemize}

Furthermore, the argumentation contained in the proofs of structural results of \cite[pp. 3-4]{Be} shows that the following holds:

\begin{theorem}\label{krew}
Let $f  : I\rightarrow X$ be almost anti-periodic. Then we have:
\begin{itemize}
\item[(i)] $cf(\cdot)$ is almost anti-periodic for any $c\in {\mathbb C}.$
\item[(ii)] If $X={\mathbb C}$ and $\inf_{x\in {\mathbb R}}|f(x)|=m>0,$ then $1/f(\cdot)$ is almost anti-periodic.
\item[(iii)] If $(g_{n}: I \rightarrow X)_{n\in {\mathbb N}}$ is a sequence of almost anti-periodic functions and $(g_{n})_{n\in {\mathbb N}}$ converges uniformly to a function $g: I \rightarrow X$, then
$g(\cdot)$ is almost anti-periodic.
\end{itemize}
\end{theorem}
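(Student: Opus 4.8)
The plan is to prove the three items by directly manipulating the inequality \eqref{r-basara} that defines $\epsilon$-antiperiods, in close analogy with the classical stability properties of almost periodic functions. For part (i), if $c = 0$ the claim is trivial, and if $c \neq 0$ one simply observes that $\tau \in \vartheta_{ap}(f, \epsilon/|c|)$ implies $\tau \in \vartheta_{ap}(cf, \epsilon)$, so relative density of $\vartheta_{ap}(f, \epsilon/|c|)$ in $I$ immediately gives relative density of $\vartheta_{ap}(cf, \epsilon)$. No real obstacle here; it is a one-line rescaling.

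For part (ii), I would first record that $f(\cdot)$ is bounded: by Theorem \ref{izgubio se}(i) it is almost periodic, hence by Theorem \ref{svinja}(i) it lies in $BUC({\mathbb R} : {\mathbb C})$, so $M := \sup_{x} |f(x)| < \infty$. Then for $\tau \in \vartheta_{ap}(f, \epsilon)$ and any $x \in {\mathbb R}$,
\begin{align*}
\left| \frac{1}{f(x+\tau)} + \frac{1}{f(x)} \right| = \frac{|f(x) + f(x+\tau)|}{|f(x+\tau)|\,|f(x)|} \leq \frac{\epsilon}{m^{2}},
\end{align*}
using $|f(x)| \geq m$ and $|f(x+\tau)| \geq m$. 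Hence $\vartheta_{ap}(f, m^{2}\epsilon) \subseteq \vartheta_{ap}(1/f, \epsilon)$, and since the former is relatively dense so is the latter. I should also note in passing that $1/f$ is continuous on ${\mathbb R}$ because $f$ is continuous and bounded away from zero.

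For part (iii), I would use an $\epsilon/3$-argument. Fix $\epsilon > 0$ and choose $n$ with $\|g_{n}(t) - g(t)\| \leq \epsilon/3$ for all $t \in I$. If $\tau \in \vartheta_{ap}(g_{n}, \epsilon/3)$, then for every $t \in I$,
\begin{align*}
\|g(t+\tau) + g(t)\| \leq \|g(t+\tau) - g_{n}(t+\tau)\| + \|g_{n}(t+\tau) + g_{n}(t)\| + \|g_{n}(t) - g(t)\| \leq \epsilon,
\end{align*}
so $\vartheta_{ap}(g_{n}, \epsilon/3) \subseteq \vartheta_{ap}(g, \epsilon)$; relative density of the left-hand set forces that of the right-hand one. (Continuity of $g$ is automatic as the uniform limit of continuous functions.) I expect part (ii) to be the only step requiring any genuine care, and there the sole subtlety is remembering to invoke boundedness of $f$ — which comes for free from almost periodicity — in order to control the denominator; everything else is a routine triangle-inequality estimate of the kind used throughout Besicovitch's treatment.
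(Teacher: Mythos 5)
Your proposal is correct and follows exactly the route the paper intends: the paper gives no written proof, merely citing the structural arguments of Besicovitch (pp.~3--4), and your rescaling, $m^{2}$-denominator bound, and $\epsilon/3$-argument are precisely those arguments adapted from differences $f(t+\tau)-f(t)$ to sums $f(t+\tau)+f(t)$. The only cosmetic point is that in (ii) the upper bound $M=\sup|f|$ is never actually used --- the denominator is controlled solely by the lower bound $m$ --- so the appeal to boundedness can be dropped.
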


Concerning products and sums of almost anti-periodic functions, the situation is much more complicated than for the usually examined class of almost periodic functions:

\begin{example}\label{prc}
\begin{itemize}
\item[(i)] The product of two scalar almost anti-periodic functions need not be almost anti-periodic. To see this, consider the functions $f_{1}(t)=f_{2}(t)=\cos t,$ $t\in {\mathbb R},$ which are clearly (almost) anti-periodic. Then $f_{1}(t)\cdot f_{2}(t)=\cos^{2}t,$ $t\in {\mathbb R},$ $\cos^{2}(t+\tau) +\cos^{2}t\geq \cos^{2}t,$  $\tau,\ t\in {\mathbb R}$ and therefore $\vartheta_{ap}(f_{1}\cdot f_{2},\epsilon)=\emptyset$ for any $\epsilon \in (0,1).$
\item[(ii)] The sum of two scalar almost anti-periodic functions need not be almost anti-periodic, so that the almost anti-periodic functions do not form a vector space. To see this, consider the functions $f_{1}(t)=2^{-1}\cos 4t $ and $f_{2}(t)=2 \cos 2t,$ $t\in {\mathbb R},$ which are clearly (almost) anti-periodic. Then
$$
f_{1}(t)+f_{2}(t)=4\cos^{4}t-\frac{3}{2},\quad t\in {\mathbb R}.
$$
Asssume that $f_{1}+f_{2}$ is almost anti-periodic. Then the above identity implies that the function $t\mapsto 8\cos^{4}t-3,$ $t\in {\mathbb R}$ is almost anti-periodic, as well. This, in particular, yields that for any $\epsilon \in (0,1)$ we can find $\tau \in {\mathbb R}$
such that
$$
\bigl | 8\cos^{4}(t+\tau) +8\cos^{4}t -6\bigr| \leq \epsilon,\quad t\in {\mathbb R}.
$$
Plugging $t=\pi,$ we get that $8\cos^{4}\tau +2 \leq \epsilon,$ which is a contradiction. Finally, we would like to point out that there exists a large number of much simpler examples which can be used for verification of the statement clarified in this part; for example, the interested reader can easily check that the function $t\mapsto  \cos t +\cos 2t,$ $t\in {\mathbb R}$ is not almost anti-periodic.
\end{itemize}
\end{example}

Assume that $f : I\rightarrow X$ is almost anti-periodic. Then it can be easily seen that $f(\cdot+a)$ and $f(b\, \cdot)$ are likewise almost anti-periodic,
where $a\in I$ and $b\in I \setminus \{0\}.$

Denote now by $ANP_{0}(I : X)$ the linear span of almost anti-periodic functions $I \mapsto X.$ By Theorem \ref{izgubio se}(i), $ANP_{0}(I : X)$ is a linear subspace of $AP(I : X).$ Let $ANP(I : X)$ be the linear closure of $ANP_{0}(I : X)$ in $AP(I : X).$ Then, clearly,  $ANP(I : X)$ is a Banach space. Furthermore, we have the following result:

\begin{theorem}\label{ghf-prc}
$ANP(I : X)=AP_{{\mathbb R} \setminus \{0\}}(I : X).$
\end{theorem}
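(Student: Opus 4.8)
The plan is to prove the two inclusions separately. The inclusion $ANP(I:X)\subseteq AP_{{\mathbb R}\setminus\{0\}}(I:X)$ is the easier one: since $AP_{{\mathbb R}\setminus\{0\}}(I:X)$ is a closed subspace of $AP(I:X)$, it suffices to show that every almost anti-periodic $f$ has $0\notin\sigma(f)$, i.e. $P_0(f)=0$. This is the heart of the ``easy'' direction and I expect it to be a short computation: given $\epsilon>0$ pick an $\epsilon$-antiperiod $\tau$; then for $t$ large, $\frac1t\int_0^t f(s)\,ds$ and $\frac1t\int_0^t f(s+\tau)\,ds$ differ by a term that is $O(1/t)$ (the ``boundary'' contribution from shifting the interval, using boundedness of $f$ from Theorem \ref{svinja}(i)), while $f(s+\tau)+f(s)$ has norm $\le\epsilon$ pointwise, so $\|2P_0(f)\|\le\epsilon$ after passing to the limit. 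Letting $\epsilon\downarrow 0$ gives $P_0(f)=0$. Hence each almost anti-periodic $f$ lies in $AP_{{\mathbb R}\setminus\{0\}}(I:X)$, so $ANP_0(I:X)$ does too, and taking closures (the subspace being closed) yields the inclusion.

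For the reverse inclusion $AP_{{\mathbb R}\setminus\{0\}}(I:X)\subseteq ANP(I:X)$, the natural route is via spectral synthesis, Theorem \ref{svinja}(ix): any $f\in AP_{{\mathbb R}\setminus\{0\}}(I:X)$ lies in the closed span of functions $t\mapsto e^{i\mu t}x$ with $\mu\in\sigma(f)\subseteq{\mathbb R}\setminus\{0\}$ and $x\in R(f)\subseteq X$. Since $ANP(I:X)$ is a closed subspace, it is enough to check that each such elementary function $t\mapsto e^{i\mu t}x$ with $\mu\neq 0$ belongs to $ANP(I:X)$. But for $\mu\neq 0$ the function $t\mapsto e^{i\mu t}x$ is genuinely anti-periodic: with $\omega:=\pi/|\mu|>0$ one has $e^{i\mu(t+\omega)}x=e^{i\,\mathrm{sgn}(\mu)\pi}e^{i\mu t}x=-e^{i\mu t}x$. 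By Theorem \ref{izgubio se}(ii) it is almost anti-periodic, hence lies in $ANP_0(I:X)\subseteq ANP(I:X)$. Taking the closed linear span shows $f\in ANP(I:X)$, completing this inclusion.

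The one subtlety worth spelling out, and the step I would be most careful about, is the passage through spectral synthesis on the half-line $I=[0,\infty)$: there $\sigma(f)$ and $R(f)$ and the spectral synthesis statement (ix) are all understood via the extension isometry $E:AP([0,\infty):X)\to AP({\mathbb R}:X)$, and one should note that $E$ commutes with the relevant operations (it is stated that $E(Bf)=B(Ef)$ for $B\in L(X)$, and $E$ clearly sends $t\mapsto e^{i\mu t}x$ on $[0,\infty)$ to $t\mapsto e^{i\mu t}x$ on ${\mathbb R}$), so that membership in $AP_{{\mathbb R}\setminus\{0\}}$ and the synthesis decomposition transfer cleanly between $[0,\infty)$ and ${\mathbb R}$. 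An equivalent and perhaps cleaner organization is to do the whole argument on ${\mathbb R}$ and then apply $E^{-1}$; I would likely phrase it that way to avoid repeating arguments for the two choices of $I$. Modulo this bookkeeping, both inclusions are short, and the genuinely load-bearing fact is the elementary identity $P_0(f)=0$ for almost anti-periodic $f$ in the first paragraph.
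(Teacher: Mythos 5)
Your proposal is correct and follows essentially the same route as the paper: spectral synthesis reduces one inclusion to the anti-periodicity of $t\mapsto e^{i\mu t}x$ for $\mu\neq 0$, and the other inclusion comes down to showing $P_{0}(f)=0$ for every almost anti-periodic $f$. The only (cosmetic) difference is in that last step: you compare the mean with its translate by a fixed $\epsilon$-antiperiod and use the $O(1/t)$ boundary term, whereas the paper evaluates $\frac{1}{2\omega_{n}}\int_{0}^{2\omega_{n}}f(s)\,ds$ along a sequence of antiperiods $\omega_{n}\to\infty$; both are valid.
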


\begin{proof}
Since the mapping $E : AP([0,\infty) : X) \rightarrow AP({\mathbb R} : X)$ is a linear surjective isometry, it suffices to consider the case in which $I={\mathbb R}.$ Assume first that
$f\in AP_{{\mathbb R} \setminus \{0\}}(I : X).$
By spectral synthesis (see Theorem \ref{svinja}(ix)), we have that
$$
f\in \overline{span\{e^{i\mu \cdot} x : \mu \in \sigma(f),\ x\in R(f)\}},
$$
where the closure is taken in the space $AP({\mathbb R} : X).$
Since $\sigma(f) \subseteq {\mathbb R} \setminus \{0\}$ and the function $t\mapsto e^{i\mu t},$ $t\in {\mathbb R}$ ($\mu \in {\mathbb R} \setminus \{0\}$) is anti-periodic, we have that $span\{e^{i\mu \cdot} x : \mu \in \sigma(f),\ x\in R(f)\}\subseteq ANP_{0}({\mathbb R} : X).$ Hence, $f\in ANP({\mathbb R} : X).$ The converse statement immediately follows if we prove that, for any fixed function  $f\in ANP({\mathbb R} : X),$ we have that $P_{0}(f)=0,$ i.e.,
\begin{align}\label{spektar}
\lim_{t\rightarrow \infty}\frac{1}{t}\int^{t}_{0}f(s)\, ds =0.
\end{align}
By almost periodicity of $f(\cdot),$ the limit in \eqref{spektar} exists.
Hence, it is enough to show that for any
given number $\epsilon>0$ we can find a sequence $(\omega_{n})_{n\in {\mathbb N}}$ of positive reals such that $\lim_{n\rightarrow \infty}\omega_{n}=\infty$ and
\begin{align}\label{mavari}
\Biggl \| \frac{1}{2\omega_{n}}\int^{2\omega_{n}}_{0}f(s)\, ds \Biggr \| \leq \epsilon/2,\quad n\in {\mathbb N}.
\end{align}
By definition of almost anti-periodicity, we have the existence of a number $l>0$ such that any interval $I_{n}=[nl,(n+1)l]$ ($n\in {\mathbb N}$) contains a number $\omega_{n}$ that is anti-period for $f(\cdot).$ The validity of \eqref{mavari} is a consequence of the following computation:
\begin{align*}
\Biggl \| & \int^{2\omega_{n}}_{0}f(s)\, ds \Biggr \|
\\& =\Biggl \| \int^{\omega_{n}}_{0}f(s)\, ds + \int^{2\omega_{n}}_{\omega_{n}}f(s)\, ds \Biggr \|
\\ & =\Biggl \| \int^{\omega_{n}}_{0}\bigl[f(s)+f(s+\omega_{n})\bigr]\, ds \Biggr \|
\\ & \leq \int^{\omega_{n}}_{0}\bigl \| f(s)+f(s+\omega_{n})\bigr \|\, ds\leq \epsilon \omega_{n},\quad n\in {\mathbb N},
\end{align*}
finishing the proof of theorem.
\end{proof}

Let $f\in AP(I:X)$ and $\emptyset \neq \Lambda \subseteq {\mathbb R}.$ Since
$$
\sigma(f)\subseteq \Lambda \mbox{ iff } P_{r}(f)=0,\ r\in {\mathbb R} \setminus \Lambda \mbox{ iff } P_{0}\bigl( e^{-ir\cdot}f(\cdot)\bigr)=0,\ r\in {\mathbb R} \setminus \Lambda
$$
we have the following corollary of Theorem \ref{ghf-prc} (see also \cite[Corollary 4.5.9]{Ar}):

\begin{corollary}\label{ghf-prcc}
Let $f\in AP(I:X)$ and $\emptyset \neq \Lambda \subseteq {\mathbb R}.$ Then $f\in AP_{\Lambda}(I:X)$ iff $e^{-ir\cdot}f(\cdot)\in
ANP(I : X)$ for all $r\in {\mathbb R} \setminus \Lambda.$
\end{corollary}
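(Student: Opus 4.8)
The plan is to obtain the corollary directly from Theorem \ref{ghf-prc} together with the chain of equivalences displayed immediately before the statement. The first thing to note is that for every $r\in{\mathbb R}$ the function $e^{-ir\cdot}f(\cdot)$ again lies in $AP(I:X)$, by Theorem \ref{svinja}(ii) (multiplication of an $X$-valued almost periodic function by the scalar almost periodic function $t\mapsto e^{-irt}$), so it makes sense to apply Theorem \ref{ghf-prc} to it.

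Next I would record the spectral shift identity $P_{s}(e^{-ir\cdot}f(\cdot))=P_{s+r}(f)$ for all $r,s\in{\mathbb R}$, which is immediate from the defining limit of the Bohr transform in Theorem \ref{svinja}(iii); in particular $0\in\sigma(e^{-ir\cdot}f(\cdot))$ if and only if $r\in\sigma(f)$, which is precisely the last of the three displayed equivalences. Combining this with Theorem \ref{ghf-prc}, one gets that $e^{-ir\cdot}f(\cdot)\in ANP(I:X)=AP_{{\mathbb R}\setminus\{0\}}(I:X)$ if and only if $0\notin\sigma(e^{-ir\cdot}f(\cdot))$, that is, if and only if $P_{r}(f)=0$.

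With this in hand the two implications are bookkeeping. For the forward direction, if $f\in AP_{\Lambda}(I:X)$, i.e. $\sigma(f)\subseteq\Lambda$, then every $r\in{\mathbb R}\setminus\Lambda$ satisfies $r\notin\sigma(f)$, hence $P_{r}(f)=0$, hence $e^{-ir\cdot}f(\cdot)\in ANP(I:X)$ by the previous paragraph. For the converse, if $e^{-ir\cdot}f(\cdot)\in ANP(I:X)$ for all $r\in{\mathbb R}\setminus\Lambda$, then $P_{r}(f)=0$ for all such $r$, so no point of ${\mathbb R}\setminus\Lambda$ belongs to $\sigma(f)$, i.e. $\sigma(f)\subseteq\Lambda$ and $f\in AP_{\Lambda}(I:X)$.

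I do not expect a genuine obstacle here: the statement is a formal consequence of Theorem \ref{ghf-prc}. The only points demanding a word of care are the verification that the unimodular character $e^{-ir\cdot}$ keeps us inside $AP(I:X)$ and shifts the Bohr spectrum by $-r$, and the remark that $\Lambda$ is allowed to be an arbitrary nonempty subset of ${\mathbb R}$ — this causes no difficulty, since $AP_{\Lambda}(I:X)$ is defined solely through the inclusion $\sigma(f)\subseteq\Lambda$, with no regularity assumed on $\Lambda$.
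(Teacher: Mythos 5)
Your proposal is correct and follows essentially the same route as the paper: the paper derives the corollary from Theorem \ref{ghf-prc} via the displayed chain of equivalences $\sigma(f)\subseteq\Lambda$ iff $P_{r}(f)=0$ for $r\in{\mathbb R}\setminus\Lambda$ iff $P_{0}(e^{-ir\cdot}f(\cdot))=0$ for $r\in{\mathbb R}\setminus\Lambda$, which is exactly your spectral-shift identity plus bookkeeping. You merely spell out the details (membership of $e^{-ir\cdot}f(\cdot)$ in $AP(I:X)$ and the shift $P_{s}(e^{-ir\cdot}f)=P_{s+r}(f)$) that the paper leaves implicit.
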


Further on, Theorem \ref{ghf-prc} combined with the obvious equality $\sigma(Ef)=\sigma(f)$ immediately implies that the unique ANP extension of a function $f\in ANP([0,\infty) : X)$ to the whole real axis is $Ef(\cdot).$ As the next proposition shows, this also holds for almost anti-periodic functions:

\begin{proposition}\label{batty}
Suppose that $f : [0,\infty) \rightarrow X$ is almost anti-periodic. Then $Ef: {\mathbb R} \rightarrow X$ is a unique almost anti-periodic extension
of $f(\cdot)$ to the whole real axis.
\end{proposition}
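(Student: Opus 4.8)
The plan is to use the extension isometry $E$ together with the almost‑anti‑periodicity criterion in terms of translates. First I would note that, since $f\in ANP([0,\infty):X)\subseteq AP([0,\infty):X)$, the function $Ef$ is already the unique continuous almost periodic extension of $f$ to $\mathbb{R}$ (by the properties of $E$ recalled in the Preliminaries). So the only thing to verify is that $Ef$ is in fact almost \emph{anti}-periodic on $\mathbb{R}$, and uniqueness will then be automatic because any almost anti-periodic extension is in particular an almost periodic extension, hence equal to $Ef$.

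To prove $Ef$ is almost anti-periodic, I would fix $\epsilon>0$ and use the relative density of $\vartheta_{ap}(f,\epsilon)$ in $[0,\infty)$: there is $l>0$ so that every interval of length $l$ in $[0,\infty)$ meets $\vartheta_{ap}(f,\epsilon)$. The key point is to transfer the estimate $\|f(t+\tau)+f(t)\|\le\epsilon$ ($t\ge 0$) to the estimate $\|Ef(t+\tau)+Ef(t)\|\le\epsilon$ for \emph{all} $t\in\mathbb{R}$. This I would do via Theorem~\ref{svinja}(xi), which says that for an almost periodic function on $[0,\infty)$ the sup-norm is attained on any half-line $[t_0,\infty)$; applied to the almost periodic function $s\mapsto Ef(s+\tau)+Ef(s)$ on $\mathbb{R}$ (equivalently, after composing with the surjective isometry, to its restriction to $[0,\infty)$), one gets
\begin{align*}
\sup_{t\in\mathbb{R}}\bigl\|Ef(t+\tau)+Ef(t)\bigr\|=\sup_{t\geq 0}\bigl\|Ef(t+\tau)+Ef(t)\bigr\|=\sup_{t\geq 0}\bigl\|f(t+\tau)+f(t)\bigr\|\leq\epsilon .
\end{align*}
Hence every $\epsilon$-antiperiod of $f$ on $[0,\infty)$ is an $\epsilon$-antiperiod of $Ef$ on $\mathbb{R}$, so $\vartheta_{ap}(Ef,\epsilon)\supseteq\vartheta_{ap}(f,\epsilon)$ is relatively dense in $\mathbb{R}$, and $Ef$ is almost anti-periodic.

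The main obstacle, such as it is, is making the sup-transfer rigorous: one must know that $s\mapsto Ef(s+\tau)+Ef(s)$ is genuinely almost periodic on $\mathbb{R}$ (clear from Theorem~\ref{svinja}(ii), since translates of almost periodic functions are almost periodic and $AP(\mathbb{R}:X)$ is a vector space) and then invoke the half-line sup property correctly; a clean way is to observe that for $g\in AP(\mathbb{R}:X)$ one has $\sup_{t\in\mathbb{R}}\|g(t)\|=\sup_{t\geq 0}\|g(t)\|$ by Theorem~\ref{svinja}(xi) applied with $t_0=0$, since $g|_{[0,\infty)}\in AP([0,\infty):X)$ and $E(g|_{[0,\infty)})=g$. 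Everything else is a direct unwinding of definitions; uniqueness needs no separate argument beyond the remark that an almost anti-periodic extension is a fortiori a continuous almost periodic extension, and such an extension is unique.
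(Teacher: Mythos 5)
Your argument is, at its core, the same as the paper's, just phrased through Theorem \ref{svinja}(xi) instead of through the isometry of the translation group: the paper's computation $\|[W(t+\tau)f+W(t)f](0)\|\le\|W(t)\|\,\|W(\tau)f+f\|_{L^{\infty}([0,\infty))}=\sup_{s\ge0}\|f(s+\tau)+f(s)\|\le\epsilon$ is exactly your sup-transfer, and your uniqueness remark is the paper's. So the main mechanism is sound.

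The one place you are too quick is the final step. The set $\vartheta_{ap}(f,\epsilon)$ consists of \emph{positive} numbers and is relatively dense in $[0,\infty)$, so the inclusion $\vartheta_{ap}(Ef,\epsilon)\supseteq\vartheta_{ap}(f,\epsilon)$ does not by itself yield relative density in ${\mathbb R}$: an interval of length $l$ contained in $(-\infty,-l)$ meets no subset of $(0,\infty)$. You must also produce $\epsilon$-antiperiods of $Ef$ located in negative intervals, which requires the additional observation (recorded as fact (ii) at the beginning of Section \ref{kragujevac}) that for a function on ${\mathbb R}$, if $\tau$ satisfies \eqref{r-basara} then so does $-\tau$. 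This is precisely why the paper's proof works with intervals of length $2l$ and splits into the three cases $I\subseteq[0,\infty)$, $I\subseteq(-\infty,0]$, and $I$ straddling $0$, handling the negative case by the estimate $\|W(t+\tau)f+W(t)f\|\le\|W(t+\tau)\|\,\|W(-\tau)f+f\|=\sup_{s\ge0}\|f(s-\tau)+f(s)\|\le\epsilon$. Adding one sentence invoking the symmetry $\tau\mapsto-\tau$ (or reproducing the paper's case analysis) closes the gap; everything else in your write-up is correct.
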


\begin{proof}
The uniqueness of an almost anti-periodic extension of $f(\cdot)$ follows from the uniqueness of an almost periodic extension of $f(\cdot).$
It remains to be proved that $Ef: {\mathbb R} \rightarrow X$ is almost anti-periodic. To see this, let $\epsilon>0$ be given. Then there exists $l>0$ such that any interval $I\subseteq [0,\infty)$ of length $l$ contains a number $\tau \in I$ such that $\|f(s+\tau)+f(s)\|\leq \epsilon,$
$s\geq 0.$
We only need to prove that any interval $I\subseteq {\mathbb R}$ of length $2l$ contains a number $\tau \in I$ such that
$$
\bigl\|[Ef](t+\tau)+[Ef](t)\bigr\|=\bigl\| [W(t+\tau)f +W(t)f](0)\bigr\| \leq \epsilon,\quad
t\in {\mathbb R}.
$$
If $I\subseteq [0,\infty),$ then the situation is completely clear. Suppose now that
$I\subseteq (-\infty,0].$ Then $-I\subseteq [0,\infty)$ and there exists a number $-\tau \in -I$ such that $\sup_{s\geq 0}\| f(s-\tau)+f(s) \|\leq \epsilon.$ Then the conclusion follows from the computation
\begin{align*}
\| [W(t+\tau)f &+W(t)f](0)\|\leq \| W(t+\tau) f+W(t)f\|_{L^{\infty}([0,\infty))}
\\ & \leq \|W(t+\tau)\|_{L^{\infty}([0,\infty))} \|W(-\tau)f+f\|_{L^{\infty}([0,\infty))}
\\ & =\sup_{s\geq 0}\| f(s-\tau)+f(s) \| \leq \epsilon,\quad t\in {\mathbb R}.
\end{align*}
Finally, if $I=I_{1}\cup I_{2},$ where $I_{1}=[a,0]$ ($a<0$) and $I_{2}=[0,b]$ ($b>0$), then $|a|\geq l$ or $b\geq l.$ In the case that
$|a|\geq l,$ then the conclusion follows similarly as in the previously considered case. If $b\geq l,$ then the conclusion follows from the computation
 \begin{align*}
\| [W(t+\tau)f &+W(t)f](0)\|\leq \| W(t+\tau) f+W(t)f\|_{L^{\infty}([0,\infty))}
\\ & \leq \|W(t)\|_{L^{\infty}([0,\infty))} \|W(\tau)f+f\|_{L^{\infty}([0,\infty))}
\\ & =\sup_{s\geq 0}\| f(s+\tau)+f(s) \| \leq \epsilon,\quad t\in {\mathbb R},
\end{align*}
where $\tau \in I_{2}$ is an $\epsilon$-antiperiod of $f(\cdot).$
\end{proof}

For various generalizations of almost periodic functions, we refer the reader to \cite{Ko}. In the following definition, we introduce the notion of a Stepanov almost anti-periodic function.

\begin{definition}\label{stepa-anti-al}
Let $1\leq p<\infty,$ and let $f\in L_{loc}^{p}(I : X).$ Then we say that $f(\cdot)$ is Stepanov $p$-almost anti-periodic function, $S^{p}$-almost anti-periodic shortly, iff the function $\hat{f}:  I \rightarrow L^{p}([0,1]:X),$ defined by
$$
\hat{f}(t)(s):=f(t+s),\quad t\in I,\ s\in [0,1],
$$
is almost anti-periodic.
\end{definition}

It can be easily seen that any almost anti-periodic function needs to be $S^{p}$-almost anti-periodic, as well as that any $S^{p}$-almost anti-periodic function has to be $S^{p}$-almost periodic ($1\leq p<\infty$).

\section{Almost Anti-Periodic Properties of Convolution Products}\label{prckovic-entai}

Since almost anti-periodic functions do not form a vector space, we will focus our attention here to the almost anti-periodic properties of finite and infinite convolution product, which is undoubtedly a safe and sound way for providing certain applications to abstract PDEs.

\begin{proposition}\label{cuj-rad}
Suppose that $1\leq p <\infty,$ $1/p +1/q=1$
and $(R(t))_{t> 0}\subseteq L(X)$ is a strongly continuous operator family satisfying that $M:=\sum_{k=0}^{\infty}\|R(\cdot)\|_{L^{q}[k,k+1]}<\infty .$ If $g : {\mathbb R} \rightarrow X$ is $S^{p}$-almost anti-periodic, then the function $G(\cdot),$ given by
\begin{align}\label{wer}
G(t):=\int^{t}_{-\infty}R(t-s)g(s)\, ds,\quad t \in {\mathbb R},
\end{align}
is well-defined and almost anti-periodic.
\end{proposition}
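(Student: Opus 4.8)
The plan is to mimic the standard argument for almost periodicity of the infinite convolution product, adapting it to the anti-periodic setting. First I would verify that $G(\cdot)$ is well-defined: writing $G(t)=\int_{0}^{\infty}R(\sigma)g(t-\sigma)\,d\sigma=\sum_{k=0}^{\infty}\int_{k}^{k+1}R(\sigma)g(t-\sigma)\,d\sigma$, Hölder's inequality on each unit interval gives $\|\int_{k}^{k+1}R(\sigma)g(t-\sigma)\,d\sigma\|\leq \|R(\cdot)\|_{L^{q}[k,k+1]}\,\|g\|_{S^{p}}$ (here using that the Stepanov norm controls the $L^{p}$-norm of $g$ on any unit interval, which is bounded since $g$ is $S^{p}$-almost anti-periodic, hence $S^{p}$-almost periodic, hence Stepanov-bounded). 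Summing over $k$ and invoking $M<\infty$ shows the series converges absolutely and uniformly in $t$, so $G$ is well-defined and bounded; continuity of $G$ follows from the strong continuity of $R(\cdot)$ together with dominated convergence.

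Next I would establish almost anti-periodicity directly from the definition. Fix $\epsilon>0$. Since $\hat{g}:\mathbb{R}\to L^{p}([0,1]:X)$ is almost anti-periodic, there is $l>0$ such that every interval of length $l$ contains a $\tau$ with $\|\hat{g}(\cdot+\tau)+\hat{g}(\cdot)\|_{\infty}\leq \epsilon/M$, i.e. $\|g(t+s+\tau)+g(t+s)\|_{L^{p}_{s}[0,1]}\leq \epsilon/M$ for all $t\in\mathbb{R}$. For such a $\tau$ I would estimate
\begin{align*}
\bigl\|G(t+\tau)+G(t)\bigr\| &=\Biggl\|\int_{0}^{\infty}R(\sigma)\bigl[g(t+\tau-\sigma)+g(t-\sigma)\bigr]\,d\sigma\Biggr\|\\
&\leq \sum_{k=0}^{\infty}\int_{k}^{k+1}\|R(\sigma)\|\,\bigl\|g(t+\tau-\sigma)+g(t-\sigma)\bigr\|\,d\sigma\\
&\leq \sum_{k=0}^{\infty}\|R(\cdot)\|_{L^{q}[k,k+1]}\,\bigl\|g(t+\tau-\cdot)+g(t-\cdot)\bigr\|_{L^{p}[k,k+1]}.
\end{align*}
The key point is that each factor $\bigl\|g(t+\tau-\cdot)+g(t-\cdot)\bigr\|_{L^{p}[k,k+1]}$ is, after the change of variable $\sigma\mapsto t-\sigma$ mapping $[k,k+1]$ to a unit interval, exactly a quantity of the form $\|g(u+\tau)+g(u)\|_{L^{p}_{u}[\alpha,\alpha+1]}$, which is bounded by $\epsilon/M$ by the displayed Stepanov anti-period estimate (valid uniformly in the base point of the unit interval). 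Therefore $\|G(t+\tau)+G(t)\|\leq (\epsilon/M)\sum_{k}\|R(\cdot)\|_{L^{q}[k,k+1]}=\epsilon$ for all $t\in\mathbb{R}$, so $\tau$ is an $\epsilon$-antiperiod of $G$. Since such $\tau$ occur in every interval of length $l$, the set $\vartheta_{ap}(G,\epsilon)$ is relatively dense, and $G$ is almost anti-periodic.

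The main obstacle — really a bookkeeping point rather than a deep one — is handling the reflection in the convolution variable cleanly: the anti-period estimate for $\hat g$ is phrased in terms of translations $g(\cdot+\tau)$, whereas the convolution produces $g(t-\sigma)$, so one must be careful that reversing orientation on each dyadic interval still lands inside the regime where the Stepanov estimate applies, and that the Stepanov norm of $g$ on an arbitrary unit interval (not just $[0,1]$) is controlled — this is exactly the analogue of Theorem~\ref{svinja}(xi)/(iii) for the Stepanov class and is where I would be most careful. Everything else is the routine Hölder-plus-summation estimate, and the relative density of $\epsilon$-antiperiods transfers verbatim from $\hat g$ to $G$.
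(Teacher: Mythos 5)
Your proposal is correct and follows essentially the same route as the paper: the paper likewise rewrites $G(t)=\int_{0}^{\infty}R(s)g(t-s)\,ds$, splits into unit intervals, applies H\"older's inequality, and uses the change of variables sending $[k,k+1]$ to $[t-k-1,t-k]$ together with the fact that the Stepanov anti-period estimate $\int_{t}^{t+1}\|g(s+\tau)+g(s)\|^{p}\,ds\leq\epsilon^{p}$ holds uniformly in $t$ (which is exactly the definition of $\tau$ being an $\epsilon$-antiperiod of $\hat{g}$), so your ``bookkeeping'' worry is resolved just as you anticipated. The only cosmetic difference is that the paper delegates well-definedness and continuity of $G$ to a cited result on $S^{p}$-almost periodic convolutions and ends with the bound $M\epsilon$ rather than normalizing to $\epsilon/M$ at the outset.
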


\begin{proof}
It can be easily seen that, for every $ t\in {\mathbb R},$ we have $G(t)=\int^{\infty}_{0}R(s)g(t-s)\, ds.$ Since $g(\cdot)$ is $S^{p}$-almost periodic, we can apply \cite[Proposition 2.11]{Kos} in order to see that $G(\cdot)$ is well-defined and almost periodic.
It remains to be proved that $G(\cdot)$ is almost anti-periodic.
Let a number $\epsilon>0$ be given in advance.
Then we can find a finite number $l>0$
such that any subinterval $I$ of ${\mathbb R}$ of length $l$ contains a number
$\tau \in I$ such that
$
\int^{t+1}_{t} \| g(s+\tau)+g(s) \|^{p}\, ds \leq \epsilon^{p}, $ $ t\in {\mathbb R}.
$ Applying H\"older inequality and this estimate, similarly as in the proof of above-mentioned proposition, we get that
\begin{align*}
\| G(t+\tau)& +G(t)\|
\\ & \leq \int^{\infty}_{0}\|R(r) \|\cdot \| g(t+\tau -r)+g(t-r) \| \, dr
\\ & =\sum _{k=0}^{\infty} \int^{k+1}_{k}\|R(r) \| \cdot \| g(t+\tau -r)+g(t-r) \| \, dr
\\ & \leq \sum _{k=0}^{\infty} \|R(\cdot)\|_{L^{q}[k,k+1]}\Biggl(\int^{k+1}_{k}\| g(t+\tau -r)+g(t-r) \|^{p} \, dr\Biggr)^{1/p}
\\ & =\sum _{k=0}^{\infty} \|R(\cdot)\|_{L^{q}[k,k+1]} \Biggl(\int_{t-k-1}^{t-k}\| g(s+\tau)+g(s) \|^{p} \, ds\Biggr)^{1/p}
\\ & \leq \sum _{k=0}^{\infty} \|R(\cdot)\|_{L^{q}[k,k+1]} \epsilon = M \epsilon,\quad t\in {\mathbb R},
\end{align*}
which clearly implies that the set of all $\epsilon$-antiperiods of $G(\cdot)$ is relatively dense in ${\mathbb R}$.
\end{proof}

In order to relax our exposition, we shall introduce the notion of an asymptotically ($S^p$-)almost anti-periodic
function in the following way (cf. also \cite[Lemma 1.1]{He}):

\begin{definition}\label{stewpa}
\begin{itemize}
\item[(i)] Let $f\in C_{b}([0,\infty) : X).$ Then we say that $f(\cdot)$ is asymptotically almost anti-periodic iff
there are two locally functions $g: {\mathbb R} \rightarrow X$ and
$q: [0,\infty)\rightarrow X$ satisfying the following conditions:
\begin{itemize}
\item[(a)] $g$ is almost anti-periodic,
\item[(b)] $q$ belongs to the class $C_{0}([0,\infty) :X),$
\item[(c)] $f(t)=g(t)+q(t)$ for all $t\geq 0.$
\end{itemize}
\item[(ii)]
Let $1\leq p<\infty,$ and let $f\in L_{loc}^{p}([0,\infty) : X).$ Then we say that $f(\cdot)$ is asymptotically Stepanov $p$-almost anti-periodic, asymptotically $S^{p}$-almost anti-periodic shortly, iff
there are two locally $p$-integrable functions $g: {\mathbb R} \rightarrow X$ and
$q: [0,\infty)\rightarrow X$ satisfying the following conditions:
\begin{itemize}
\item[(a)] $g$ is $S^p$-almost anti-periodic,
\item[(b)] $\hat{q}$ belongs to the class $C_{0}([0,\infty) : L^{p}([0,1]:X)),$
\item[(c)] $f(t)=g(t)+q(t)$ for all $t\geq 0.$
\end{itemize}
\end{itemize}
\end{definition}

Keeping in mind Proposition \ref{cuj-rad} and the proof of \cite[Propostion 2.13]{Kos}, we can simply clarify the following result:

\begin{proposition}\label{stewpa-wqer}
Suppose that $1\leq p <\infty,$ $1/p +1/q=1$
and $(R(t))_{t> 0}\subseteq L(X)$ is a strongly continuous operator family satisfying that, for every
$s\geq 0,$ we have that
$$
m_{s}:=\sum_{k=0}^{\infty}\|R(\cdot)\|_{L^{q}[s+k,s+k+1]}<\infty .
$$
Suppose, further, that $f : [0,\infty) \rightarrow X$ is asymptotically $S^{p}$-almost anti-periodic as well as that
the locally $p$-integrable functions $g: {\mathbb R} \rightarrow X,$
$q: [0,\infty)\rightarrow X$ satisfy the conditions from \emph{Definition \ref{stewpa}(ii)}. Let there exist a finite number $M >0$ such that
the following holds:
\begin{itemize}
\item[(i)] $ \lim_{t\rightarrow +\infty}\int^{t+1}_{t}\bigl[\int_{M}^{s}\|R(r)\| \|q(s-r)\| \, dr\bigr]^{p}\, ds=0.$
\item[(ii)] $\lim_{t\rightarrow +\infty}\int^{t+1}_{t}m_{s}^{p}\, ds=0.$
\end{itemize}
Then the function $H(\cdot),$ given by
\begin{align*}
H(t):=\int^{t}_{0}R(t-s)f(s)\, ds,\quad t\geq 0,
\end{align*}
is well-defined, bounded and asymptotically $S^{p}$-almost anti-periodic.
\end{proposition}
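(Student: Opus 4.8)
The plan is to decompose $H(t)=\int_0^t R(t-s)f(s)\,ds$ using the splitting $f=g+q$ from Definition~\ref{stewpa}(ii), so that $H(t)=\int_0^t R(t-s)g(s)\,ds+\int_0^t R(t-s)q(s)\,ds=:H_1(t)+H_2(t)$. The idea is to show that $H_1$ is, up to a term vanishing at infinity, the restriction to $[0,\infty)$ of the infinite convolution product $G(t):=\int_{-\infty}^t R(t-s)g(s)\,ds$ studied in Proposition~\ref{cuj-rad}, while $H_2$ has $\widehat{H_2}\in C_0([0,\infty):L^p([0,1]:X))$. Granting both, $\widehat{H}$ is the sum of $\widehat{G|_{[0,\infty)}}$ (an $S^p$-almost anti-periodic function, by Proposition~\ref{cuj-rad} together with the elementary fact that almost anti-periodicity implies $S^p$-almost anti-periodicity noted after Definition~\ref{stepa-anti-al}) and a $C_0$-term, which is exactly the assertion that $H$ is asymptotically $S^p$-almost anti-periodic. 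Well-definedness and boundedness follow along the way from the summability hypotheses $m_s<\infty$ and the $S^p$-boundedness of $f$, exactly as in \cite[Proposition 2.13]{Kos}.

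For $H_1$, I would write $G(t)-H_1(t)=\int_{-\infty}^0 R(t-s)g(s)\,ds=\int_t^\infty R(r)g(t-r)\,dr$ and estimate its $L^p$-norm over $[t,t+1]$ by grouping the $r$-integral over unit intervals $[s+k,s+k+1]$ and applying H\"older, obtaining a bound of the form $\|g\|_{S^p}\cdot\big(\int_t^{t+1} m_s^p\,ds\big)^{1/p}$ (the tail of the series defining $m_s$), which tends to $0$ by hypothesis~(ii); hence $\widehat{G|_{[0,\infty)}}-\widehat{H_1}\in C_0$. For $H_2$, split the inner integral at the cutoff $M$: the portion $\int_0^{\max(s-M,0)}R(r)q(s-r)\,dr$ has $L^p$-norm over $[t,t+1]$ controlled by hypothesis~(i), while the portion $\int_{\max(s-M,0)}^{s}R(r)q(s-r)\,dr$ involves $q(s-r)$ with $s-r\in[0,M]$ near the ``old'' part of the trajectory but $s$ itself large, so one uses instead that $q$ vanishes at infinity is \emph{not} directly available there — rather one bounds this piece by $\sup_{u\ge t-M}\|q\|_{L^p[u,u+1]}$ times the finite quantity $\sum_{k=0}^{\lceil M\rceil}\|R(\cdot)\|_{L^q[k,k+1]}$, and this supremum $\to 0$ as $t\to\infty$ since $\widehat q\in C_0$. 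Combining, $\widehat{H_2}\in C_0([0,\infty):L^p([0,1]:X))$.

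The main obstacle I expect is the bookkeeping in the $H_2$-estimate: one must carefully split the convolution at $M$ so that the ``far'' part is absorbed by hypothesis~(i) and the ``near'' part (where $R(r)$ is integrated only over $r\in[0,M]$, so $\|R(\cdot)\|_{L^q}$ contributes a harmless finite constant) is absorbed by the decay of $\widehat q$; getting the domains of integration and the shifts right, and making sure every term is genuinely $o(1)$ in the $L^p([t,t+1])$-norm rather than merely bounded, is the delicate point. A secondary technical nuisance is handling small $t$ (namely $t<M$), where the cutoff is vacuous; this is harmless since we only care about the behaviour as $t\to+\infty$. Once these estimates are in place, assembling $\widehat H=\widehat{G|_{[0,\infty)}}+(\widehat{H_1}-\widehat{G|_{[0,\infty)}})+\widehat{H_2}$ and invoking Proposition~\ref{cuj-rad} and Definition~\ref{stewpa}(ii) finishes the proof.
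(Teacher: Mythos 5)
Your overall architecture is exactly the one the paper intends: the paper gives no written proof of Proposition \ref{stewpa-wqer}, deferring entirely to Proposition \ref{cuj-rad} and the proof of \cite[Proposition 2.13]{Kos}, and that argument is precisely your decomposition $H=H_1+H_2$, with $H_1$ compared to the infinite convolution $G$ of Proposition \ref{cuj-rad} via a tail estimate driven by hypothesis (ii), and $H_2$ shown to be Stepanov-vanishing via hypothesis (i) and the decay of $\hat q$. Your treatment of $H_1-G$ and your final assembly $\widehat H=\widehat{G|_{[0,\infty)}}+(\widehat{H_1}-\widehat{G|_{[0,\infty)}})+\widehat{H_2}$ are correct.

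There is, however, a concrete error in the $H_2$ step as written: you have interchanged the two sub-integrals relative to the hypotheses you use to bound them. Hypothesis (i) controls $\int_M^s\|R(r)\|\,\|q(s-r)\|\,dr$, i.e.\ the portion of $\int_0^s R(r)q(s-r)\,dr$ with $r\ge M$ (equivalently $s-r\le s-M$), not the portion $\int_0^{\max(s-M,0)}R(r)q(s-r)\,dr$ that you assign to it; the latter has $R$ evaluated on $[0,s-M]$ and $q$ on $[M,s]$, which (i) says nothing about. Likewise, the piece for which your bound $\sup_{u\ge t-M}\|q\|_{L^p[u,u+1]}\cdot\sum_{k=0}^{\lceil M\rceil}\|R(\cdot)\|_{L^q[k,k+1]}$ is valid is $\int_0^{\min(M,s)}R(r)q(s-r)\,dr$ (where $R$ lives on $[0,M]$ and $q(s-r)$ has large argument $s-r\ge s-M$), not $\int_{\max(s-M,0)}^{s}R(r)q(s-r)\,dr$, where $q$ is evaluated on $[0,M]$ and need not be small and $R$ is evaluated on $[s-M,s]$. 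In short, the correct split is at $r=M$, namely $\int_0^s=\int_0^{\min(M,s)}+\int_{\min(M,s)}^{s}$, after which your two bounds apply verbatim to the two pieces (in the opposite order from the one you wrote). Since both correct estimates are already present in your text and only the integration domains are mislabelled, this is a repairable slip rather than a missing idea, but as literally written the $H_2$ argument does not go through.
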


Before providing some applications, we want to note that our conclusions from \cite[Remark 2.14]{Kos} and \cite[Proposition 2.7.5]{Ko} can be reformulated for asymptotical almost anti-periodicity.

It is clear that we can apply results from this section in the study of existence and uniqueness of almost anti-periodic solutions of fractional Cauchy
inclusion
\begin{align*}
D_{t,+}^{\gamma}u(t)\in {\mathcal A}u(t)+f(t),\ t\in {\mathbb R},
\end{align*}
where $D_{t,+}^{\gamma}$ denotes the Riemann-Liouville fractional derivative of order $\gamma \in (0,1),$ and
$f : {\mathbb R} \rightarrow X$ satisfies certain properties, and ${\mathcal A}$ is a closed multivalued linear operator (see \cite{Fa} for the notion).
Furthermore, we can analyze the existence and uniqueness of asymptotically ($S^{p}$-) almost anti-periodic solutions of fractional Cauchy inclusion
\[
\hbox{(DFP)}_{f,\gamma} : \left\{
\begin{array}{l}
{\mathbf D}_{t}^{\gamma}u(t)\in {\mathcal A}u(t)+f(t),\ t\geq 0,\\
\quad u(0)=x_{0},
\end{array}
\right.
\]
where ${\mathbf D}_{t}^{\gamma}$ denotes the Caputo fractional derivative of order $\gamma \in (0,1],$ $x_{0}\in X$ and
$f : [0,\infty) \rightarrow X,$ satisfies certain properties, and ${\mathcal A}$ is a closed multivalued linear operator  (cf. \cite{Ko} for more details). Arguing so, we can analyze the existence and uniqueness of (asymptotically $S^{p}$-) almost anti-periodic solutions of the fractional Poisson heat equations
\[\left\{
\begin{array}{l}
D_{t,+}^{\gamma}[m(x)v(t,x)]=(\Delta -b )v(t,x) +f(t,x),\quad t\in {\mathbb R},\ x\in {\Omega};\\
v(t,x)=0,\quad (t,x)\in [0,\infty) \times \partial \Omega ,\\
\end{array}
\right.
\]
and
\[\left\{
\begin{array}{l}
{\mathbf D}_{t}^{\gamma}[m(x)v(t,x)]=(\Delta -b )v(t,x) +f(t,x),\quad t\geq 0,\ x\in {\Omega};\\
v(t,x)=0,\quad (t,x)\in [0,\infty) \times \partial \Omega ,\\
 m(x)v(0,x)=u_{0}(x),\quad x\in {\Omega},
\end{array}
\right.
\]
in the space $X:=L^{p}(\Omega),$ where $\Omega$ is a bounded domain in ${\mathbb R}^{n},$ $b>0,$ $m(x)\geq 0$ a.e. $x\in \Omega$, $m\in L^{\infty}(\Omega),$ $\gamma \in (0,1)$ and $1<p<\infty ;$ see \cite{Fa} and \cite{Ko} for further information in this direction.

For some other references regarding the existence and uniqueness of anti-periodic and Bloch periodic solutions of certain classes of abstract Volterra integro-differential equations, we refer the reader to
\cite{Ch}, \cite{Dim}, \cite{Guv}-\cite{Mo} and \cite{Li}-\cite{Liu}.

\vspace{0.1cm}

\noindent{\bf Acknowledgment:} This research is partially supported by grant 174024 of Ministry
of Science and Technological Development, Republic of Serbia.


\begin{thebibliography}{99}

\bibitem{Ar}
{W.~Arendt, C.~J.~K.~Batty, M.~Hieber, F.~Neubrander,} {\it
Vector-valued Laplace Transforms and Cauchy Problems}, Monographs in
Mathematics, Vol. 96, Birkh\" auser Verlag, Basel, 2001.

\bibitem{Ba}
{H.~Bart, S.~Goldberg,}
{\it Characterizations of almost periodic strongly continuous groups
and semigroups,}
Math. Ann. \textbf{236} (1978), 105--116.

\bibitem{Be}
{A.~S.~Besicovitch,}
\textit{Almost Periodic Functions,}
Dover Publications Inc., New York, 1954.

\bibitem{Bo}
{S.~Bochner,}
{\it A  new  approach  to  almost  periodicity,}
Proc.  Nat.  Acad.  Sci.
U.S.A. {\bf 48} (1962), 2039--2043.

\bibitem{Ch}
{Y.~Q.~Chen,}
{\it Anti-periodic solutions for semilinear evolution equations,}
J. Math. Anal. Appl. {\bf 315} (2006), 337--348.

\bibitem{Di}
{T.~Diagana,}
{\it Almost Automorphic Type and Almost Periodic Type Functions in Abstract Spaces,}
Springer, New York, 2013.

\bibitem{Dim}
{W.~Dimbour, V.~Valmorin,}
{\it Asymptotically antiperiodic solutions for a
nonlinear differential equation with piecewise
constant argument in a Banach space,}
Appl. Math. {\bf 7} (2016), 1726--1733.

\bibitem{Fa}
{A.~Favini, A.~Yagi,}
{\it Degenerate Differential Equations in Banach Spaces,}
Chapman and Hall/CRC
Pure and Applied Mathematics, New York, 1998.

\bibitem{Gu}
{G.~M.~N'Gu\' er\' ekata,}
\textit{Almost Automorphic and Almost Periodic Functions
in Abstract Spaces,}
Kluwer Acad. Publ., Dordrecht, 2001.

\bibitem{Guv}
{G.~M.~N'Gu\' er\' ekata, V.~Valmorin,}
{\it Antiperiodic solutions of semilinear integrodifferential equations in Banach spaces,}
Appl. Math. Comput. \textbf{218} (2012), 1118--11124.

\bibitem{Has}
{M.~F.~Hasler, G.~M.~N'Gu\' er\' ekata,}
{\it Bloch-periodic functions and some applications,}
Nonlinear Studies {\bf 21} (2014), 21--30.

\bibitem{He}
{H.~R.~Henr\' iquez,}
{\it On Stepanov-almost periodic semigroups and
cosine functions of operators,}
J. Math. Anal. Appl. {\bf 146} (1990), 420--433.

\bibitem{Mo}
{G.~Mophou, G.~M.~N'Gu\' er\' ekata, V.~Valmorin,}
{\it Asymptotic behavior of mild solutions of some fractional functional integro-differential equations,}
African Dias. J. Math. {\bf 16} (2013), 70--81.

\bibitem{Ko}
{M.~Kosti\' c,} {\it Almost Periodic and Almost Automorphic Type Solutions of Abstract Volterra Integro-Differential Equations,}
Book Manuscript, 2017.

\bibitem{Kos}
{M. Kosti\'c,}
{\it Existence of generalized almost periodic and asymptotic almost periodic
solutions to abstract Volterra integro-differential equations,}
Electron. J. Differential Equations, vol. 2017, no. {\bf 239} (2017), 1--30.

\bibitem{Le}
{M.~Levitan, V.~V.~Zhikov,}
\textit{Almost Periodic Functions and Differential Equations,}
Cambridge Univ. Press, London, 1982.

\bibitem{Li}
{J.~Liu, L.~Zhang,}
{\it Existence of anti‑periodic (differentiable)
mild solutions to semilinear differential
equations with nondense domain,}
SpringerPlus (2016) {\bf 5}:704
DOI 10.1186/s40064-016-2315-1.

\bibitem{Liu}
{ J.~H.~Liu, X.~Q.~Song, L.~T.~Zhang,}
{\it  Existence of anti-periodic mild solutions to semilinear nonautonomous evolution equations,}
J. Math. Anal. Appl. {\bf 425} (2015), 295--306.

\end{thebibliography}

\end{document}